\renewcommand{\tilde}[1]{\widetilde{#1}}
\newcommand*{\vect}[1]{\mathrm{Vect}\{#1\}}
\newcommand*{\vcd}{\mathrm{vcd}}
\newcommand*{\cd}{\mathrm{cd}}
\newcommand*{\gd}{\mathrm{gd}}
\newcommand*{\SL}{\mathrm{SL}}
\newcommand*{\SO}{\mathrm{SO}}
\newcommand*{\PSL}{\mathrm{PSL}}
\newcommand*{\PGL}{\mathrm{PGL}}
\newcommand*{\Sp}{\mathrm{Sp}}
\newcommand*{\U}{\mathrm{U}}
\newcommand*{\syst}{\mathrm{syst}}
\newcommand*{\Aut}{\mathrm{Aut}}
\newtheorem{thm}{Theorem}[section]
\newtheoremstyle{mth}{0.15cm}{0.15cm}{\itshape}{}{\scshape}{}{0.5em}{\textbf{\thmname{#1}}}
\theoremstyle{mth}
\newtheoremstyle{de}{0.15cm}{0.15cm}{\upshape}{}{\upshape}{}{0.5em}{\underline{\textbf{\thmname{#1} \thmnumber{#2}}} #3}
\theoremstyle{de}
\newtheoremstyle{rmq}{0.15cm}{0.15cm}{}{}{\itshape}{}{0.5em}{\thmname{#1} \thmnumber{#2} #3}
\theoremstyle{rmq}
\newtheorem{rmq}{Remark}[section]
\newtheoremstyle{ex}{0.15cm}{0.15cm}{}{}{\itshape}{}{0.5em}{\thmname{#1} \thmnumber{#2} #3}
\theoremstyle{ex}
\author{Cyril Lacoste}
\title{On the difficulty of finding spines}
\begin{document}

\maketitle

\noindent \textbf{Abstract.} We prove that the set of symplectic lattices in the Siegel space $\mathfrak{h}_g$ whose systoles generate a subspace of dimension at least 3 in $\mathbb{R}^{2g}$ does not contain any $\Sp(2g,\mathbb{Z})$-equivariant deformation retract of $\mathfrak{h}_g$.

\vspace{0.5 cm}

\noindent \textbf{Résumé.} Nous montrons que l'ensemble des réseaux symplectiques dans l'espace de Siegel $\mathfrak{h}_g$ dont les systoles engendrent un sous-espace de dimension au moins 3 dans $\mathbb{R}^{2g}$ ne contient aucun rétract par déformation $\Sp(2g,\mathbb{Z})$-équivariant de $\mathfrak{h}_g$.

\vspace{0.5 cm} 

\noindent \textbf{Version française abrégée.} Soit $\Gamma$ un groupe discret infini. On dit qu'un $\Gamma$-complexe cellulaire $W$ est un modèle de type $\underline{E}\Gamma$ si pour tout sous-groupe $H\subset \Gamma$, l'ensemble de points fixes $W^H$ est contractile si $H$ est fini et vide sinon. La plus petite dimension possible d'un tel espace est la dimension géométrique propre de $\Gamma$, notée $\underline  {\gd}(\Gamma)$. Nous avons montré dans \cite{Lacoste}, en nous basant sur les résultats de \cite{Aramayona}, que si $G$ est un groupe de Lie linéaire semisimple et $\Gamma \subset G$ un réseau, alors $\underline{\gd}(\Gamma)$ est égale à la dimension cohomologique virtuelle $\vcd(\Gamma)$ de $\Gamma$, c'est-à-dire la dimension cohomologique de n'importe quel sous-groupe d'indice fini sans torsion. Si $K \subset G$ est un sous-groupe compact maximal, l'espace symétrique $X=G/K$ est un modèle de type $\underline{E}\Gamma$, mais pas de dimension minimale sauf si $\Gamma$ est cocompact. Nous souhaitons ainsi trouver concrètement un modèle de type $\underline{E}\Gamma$ de dimension $\vcd(\Gamma)$. Un tel espace peut par exemple servir à calculer la cohomologie de $\Gamma$.

Il est naturel d'essayer de construire ce modèle en tant que sous-ensemble de l'espace symétrique $X$, dans ce cas nous l'appelons une épine. Plus précisément, une épine pour $\Gamma$ est un rétract par déformation $\Gamma$-équivariant de l'espace symétrique $X=G/K$, de dimension $\vcd(\Gamma)$, sur lequel $\Gamma$ agit de manière cocompacte.

Il peut paraître surprenant que de tels espaces ne sont connus que dans très peu de cas (groupes de $\mathbb{Q}$-rang 1, $\SL(n,\mathbb{Z})$). Le but de cet article est d'expliquer pourquoi il peut être difficile de trouver des épines.

Rappelons brièvement la construction de l'épine du groupe $\SL(n,\mathbb{Z})$. On identifie l'espace symétrique $S_n=\SL(n,\mathbb{R})/\SO(n)$ avec l'ensemble des réseaux de $\mathbb{R}^n$ de covolume 1 modulo isométries munis d'une $\mathbb{Z}$-base. Généralisant un résultat de Soulé (voir \cite{Soulethese}), Ash a prouvé dans \cite{Ash} que l'ensemble des réseaux dont les systoles (ou vecteurs minimaux) engendrent $\mathbb{R}^n$ est une épine pour $\SL(n,\mathbb{Z})$ (voir \cite{Ash}). Plus précisément, si $\mathcal{X}_i$ est l'ensemble des réseaux dont les vecteurs minimaux engendrent un sous-espace de dimension au moins $i$ (pour $i=1,\dots,n$), on peut montrer que $\mathcal{X}_{i+1}$ est un rétract par déformation $\SL(n,\mathbb{Z})$-équivariant de $\mathcal{X}_i$ pour tout $i=1,\dots,n-1$.

Il y a eu par la suite des tentatives de constructions similaires pour d'autres groupes arithmétiques tels que le groupe symplectique $\Sp(2g,\mathbb{Z})$. Par exemple,  Bavard a montré dans \cite{Bavard2} que l'ensemble des réseaux symplectiques dont les systoles engendrent un sous-espace non-isotrope pour la forme symplectique est un rétract $\Sp(2g,\mathbb{Z})$-équivariant de l'espace de Siegel $\mathfrak{h}_g=\Sp(2g,\mathbb{R})/\U(g)$. Malheureusement ce rétract est de codimension 1, car il existe des réseaux symplectiques avec seulement deux systoles non-isotropes. On pourrait alors essayer de  rétracter sur l'ensemble des réseaux symplectiques avec au moins trois systoles linéairement indépendantes. Nous montrons dans cet article que ce l'on ne peut pas faire cela:

\vspace{0.3 cm}

\noindent \textbf{Théorème \ref{Théorème principal}.} L'ensemble $(\mathcal{X}_3 \cap \mathfrak{h}_g)$ des réseaux symplectiques dans $\mathfrak{h}_g$ dont les systoles engendrent un sous-espace de dimension au moins 3 dans $\mathbb{R}^{2g}$ ne contient aucun modèle de type $\underline{E}\Sp(2g,\mathbb{Z})$. En particulier, il ne contient aucun rétract par déformation $\Sp(2g,\mathbb{Z})$-équivariant de $\mathfrak{h}_g$.

\vspace{0.3 cm}

Nous obtenons par la suite le même résultat en remplaçant $\Sp(2g,\mathbb{Z})$ par $\Aut(\SL(n,\mathbb{Z}))$:

\vspace{0.3 cm}

\noindent \textbf{Théorème \ref{Corollaire 1}.} L'ensemble $\mathcal{X}_3 \subset S_n=\SL(n,\mathbb{R})/\SO(n)$ ($n \geqslant 3$) des réseaux de $\mathbb{R}^{n}$ dont les systoles engendrent un sous-espace de dimension au moins 3 ne contient aucun modèle de type $\underline{E}\Aut(\SL(n,\mathbb{Z}))$.

\vspace{0.3 cm}

Ce corollaire est remarquable car $\SL(n,\mathbb{Z})$ et $\Aut(\SL(n,\mathbb{Z}))$ ne différent que d'un groupe fini et agissent tous les deux par isométries sur $S_n$.

\section{Introduction}

Let $\Gamma$ be an infinite discrete group. A \textit{model for $\underline{E}\Gamma$}, or a classifying space for proper actions, is a $\Gamma$-CW-complex $W$ such that for every subgroup $H \subset \Gamma$, the fixed point set $W^H$ is contractible if $H$ is finite and empty otherwise. Models for $\underline{E}\Gamma$ always exist, and the minimal possible dimension of such a model is the \textit{proper geometric dimension} of $\Gamma$, denoted $\underline{\gd}(\Gamma)$. Completing earlier work in \cite{Aramayona}, we proved in \cite{Lacoste} that if $G$ is a semisimple linear Lie group and $\Gamma \subset G$ is a lattice, then $\underline{\gd}(\Gamma)$ equals the \textit{virtual cohomological dimension} $\vcd(\Gamma)$ of $\Gamma$, that is the cohomological dimension of any torsionfree finite index subgroup of $\Gamma$. If $\Gamma$ is arithmetic and $K \subset G$ is a maximal compact subgroup, then $\vcd(\Gamma)$ equals the dimension of $G/K$ minus the rational rank of $\Gamma$ (see \cite{Borel-Serre}).

With the same notations, note that the symmetric space $X=G/K$ is itself a model for $\underline{E}\Gamma$, but not of minimal dimension unless $\Gamma$ is cocompact. It is then a question to find concretely a cocompact model $W$ for $\underline{E}\Gamma$ of dimension $\vcd(\Gamma)$. Besides the intrinsic interest of the problem, one can use such a model to compute the cohomology of $\Gamma$ (see examples in \cite{Soule}, \cite{Ash-Mcconnell}, \cite{Henn}, \cite{Dutour}). 

Because of the lack of another starting point, it is natural to try to construct $W$ as a subspace of the symmetric space $X$. In this case we call it a \textit{spine}.  More precisely a spine for $\Gamma$ is a $\Gamma$-equivariant deformation retract of the symmetric space $X=G/K$, of dimension $\vcd(\Gamma)$ and on which $\Gamma$ acts cocompactly. 

It might be surprising to the reader that such spines are known only in very few cases: basically only for $\mathbb{Q}$-rank 1 groups (see \cite{Yasaki}) and for $\SL(n,\mathbb{Z})$ (and somewhat more generally for linear symmetric spaces, see \cite{Soulethese}, \cite{Ash}, \cite{Souto-Pettet} and \cite{Pettet-Souto}) . The aim of this note is to maybe explain why it might not be easy to find spines.

First recall the construction of $\SL(n,\mathbb{Z})$'s spine. Indentify the symmetric space $S_n=\SL(n,\mathbb{R})/\SO(n)$ with the space of lattices in $\mathbb{R}^{n}$ of covolume 1 modulo isometries with a $\mathbb{Z}$-basis. The \textit{systole} of a lattice $\Lambda=A\mathbb{Z}^n$, with $A \in \SL(n,\mathbb{R})$, is defined as
\[ \syst(\Lambda)=\min_{v \in \mathbb{Z}^n \setminus \{0\}} |Av|.\]
We will also call systoles (or minimal vectors) the vectors $v$ which realize the minimum. A lattice is \textit{well-rounded} if its minimal vectors span $\mathbb{R}^n$. Generalizing a result of Soulé in \cite{Soulethese}, Ash proved in \cite{Ash} that the well-rounded retract, that is the set of all well-rounded lattices, is a spine for $\SL(n,\mathbb{Z})$. The idea to realize the retraction is as follows: given a non well-rounded lattice in $\mathbb{R}^n$, expand the space spanned by the shortest vectors and contract its orthogonal complement until we find an additional systole. In this way, one can prove that if $\mathcal{X}_i$ is the set of lattices whose systoles span a subspace of dimension at least $i$ (for $i=1,\dots, n$), then $\mathcal{X}_{i+1}$ is a $\SL(n,\mathbb{Z})$-equivariant deformation retract of $\mathcal{X}_i$ for every $i=1,\dots, n-1$. Remark that $\mathcal{X}_1=S_n$ and $\mathcal{X}_n$ is the set of well-rounded lattices, that is our well-rounded retract.

Some effort has been devoted to mimic the construction of the well-rounded retract in other situations. For instance in \cite{Ji} Ji constructed well-rounded retracts for mapping class groups acting on Teichmüller spaces, and  Bavard proved in \cite{Bavard2} that the symmetric space $\Sp(2g,\mathbb{R})/\U(g)$ (also known as the Siegel space $\mathfrak{h}_g$), identified with the set of symplectic lattices in $\mathbb{R}^{2g}$ endowed with a symplectic basis, admits a $\Sp(2g,\mathbb{Z})$-equivariant deformation retract, consisting of the set of symplectic lattices whose systoles span a non-isotropic subspace. In both cases, the retract has not minimal dimension. For example, in the case of $\Sp(2g,\mathbb{Z})$ the virtual cohomological dimension is $g^2$ and Bavard's retract has codimension one\footnote{Recall that in general, if $\Gamma \subset G$ is not cocompact, one can always construct a cocompact model for $\underline{E}\Gamma$ of codimension 1 in $G/K$, see Proposition 2.6 in \cite{Aramayona}.}: there are lattices there with only two systoles which are non-isotropic. To retract into a higher codimension set it would be reasonable to expect that one should be able to retract into the set of symplectic lattices with more linearly independant systoles. We prove that we cannot do this:
 
\begin{thm}\label{Théorème principal}
The set $(\mathcal{X}_3 \cap \mathfrak{h}_g)$ of symplectic lattices in $\mathfrak{h}_g$ whose systoles generate a vector space of dimension at least 3 in $\mathbb{R}^{2g}$ does not contain any model for $\underline{E}\Sp(2g,\mathbb{Z})$. In particular, it does not contain any $\Sp(2g,\mathbb{Z})$-equivariant deformation retract of $\mathfrak{h}_g$.
\end{thm}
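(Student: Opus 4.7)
The plan is to exhibit a finite subgroup $H \subset \Sp(2g,\mathbb{Z})$ whose fixed-point set in $\mathcal{X}_3 \cap \mathfrak{h}_g$ is a nonempty discrete subset of $\mathfrak{h}_g^H$ on which a centralizing copy of $\SL(2,\mathbb{Z})$ acts transitively. This will contradict the two conditions imposed on $W^H$ by the model-for-$\underline{E}\Gamma$ axioms: contractibility (forcing $W^H$ to be a single point inside the discrete set) and $Z_{\Sp(2g,\mathbb{Z})}(H)$-invariance (forbidding it to be a proper subset of a transitive orbit). Concretely, I would write $\mathbb{R}^{2g}$ as a $\mathbb{Z}$- and $\omega$-orthogonal sum of $2$-dimensional symplectic planes $V \oplus V'_1 \oplus \cdots \oplus V'_{g-1}$, fix an element $B \in \SL(2,\mathbb{Z})$ of order $3$, and for each $i$ let $\sigma_i \in \Sp(2g,\mathbb{Z})$ act as $B$ on $V'_i$ and as the identity on the other summands. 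Set $H := \langle \sigma_1,\ldots,\sigma_{g-1}\rangle \cong (\mathbb{Z}/3)^{g-1}$.

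First I would identify $\mathfrak{h}_g^H$. Because $B - I$ is invertible over $\mathbb{Q}$, the commutation relations $J\sigma_i = \sigma_i J$ force every off-diagonal block of $J$ (in the chosen decomposition) to vanish, so $J = J_V \oplus J_{V'_1} \oplus \cdots \oplus J_{V'_{g-1}}$. Each $J_{V'_i}$ must commute with $B$ and be $\omega$-compatible, which pins it down to the unique hexagonal complex structure on $V'_i$; meanwhile $J_V$ varies freely in $\mathfrak{h}_1$. Hence $\mathfrak{h}_g^H \cong \mathfrak{h}_1$.

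Next I would describe $\mathcal{X}_3 \cap \mathfrak{h}_g^H$. The metric $g_J$ splits along the summand decomposition, so every minimal vector lies in a single summand. On each $V'_i$ the systole equals the hexagonal value $s^* = (4/3)^{1/4}$, realized by $6$ vectors spanning $V'_i$; on $V$ the systole $s_V$ is bounded above by $s^*$ (this value being Hermite's bound in dimension $2$), with equality precisely at hexagonal lattices. Consequently a lattice of $\mathfrak{h}_g^H$ has systoles spanning a subspace of dimension $\geq 3$ if and only if its $V$-factor is hexagonal, in which case they span all of $\mathbb{R}^{2g}$. Thus $\mathcal{X}_3 \cap \mathfrak{h}_g^H$ is the $\SL(2,\mathbb{Z})$-orbit of $e^{i\pi/3}$ in $\mathfrak{h}_1$, an infinite discrete set.

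Finally, suppose $W \subseteq \mathcal{X}_3 \cap \mathfrak{h}_g$ is a model for $\underline{E}\Sp(2g,\mathbb{Z})$. Since $H$ is finite, $W^H$ is nonempty and contractible, hence connected; lying in the above discrete set, it must be a singleton. On the other hand, the embedding $\SL(2,\mathbb{Z}) \hookrightarrow \Sp(2g,\mathbb{Z})$ given by $A \mapsto A \oplus I_{2(g-1)}$ lands in $Z_{\Sp(2g,\mathbb{Z})}(H)$ and acts on $\mathfrak{h}_g^H \cong \mathfrak{h}_1$ as the usual modular group, hence transitively on the hexagonal orbit. A $\SL(2,\mathbb{Z})$-invariant subset of a transitive orbit is either empty or the whole orbit, contradicting $W^H$ being a nonempty singleton in an infinite orbit. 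The most delicate steps will be the block-rigidity of $H$-fixed complex structures (relying on $\det(B - I) \neq 0$) and the numerical coincidence that the hexagonal systole $(4/3)^{1/4}$ is simultaneously Hermite's bound in dimension $2$ — without this, $\mathcal{X}_3 \cap \mathfrak{h}_g^H$ would fail to be discrete.
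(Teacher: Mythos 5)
Your proposal is correct and the overall strategy is the same as the paper's: exhibit a finite $H \subset \Sp(2g,\mathbb{Z})$ whose fixed set $\mathfrak{h}_g^H$ is a copy of $\mathbb{H}^2$ with the remaining factors frozen at the hexagonal point, so that $\mathcal{X}_3 \cap \mathfrak{h}_g^H$ reduces to the (infinite, discrete) hexagonal orbit. Two genuine differences are worth flagging. First, your $H \cong (\mathbb{Z}/3)^{g-1}$ is generated purely by order-$3$ block rotations, with block-diagonality of $J$ forced in one stroke by $\det(B-I)\neq 0$; the paper instead takes the sign matrices $(I_2,\dots,-I_2,\dots,I_2)$ to cut down to $\mathbb{H}^2\times\cdots\times\mathbb{H}^2$ and then adds the single diagonal $(I_2,A_0,\dots,A_0)$ with $A_0$ of order $6$ (whose cube recovers the sign matrices). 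Both fixed sets coincide; yours is slightly more economical and makes the rigidity mechanism explicit. Second, and more substantively, you spell out the step the paper leaves implicit: a priori $W^H$ is only \emph{contained} in the discrete orbit, so it could be a contractible singleton. You close this by observing that $W^H$ is invariant under the centralizer $\SL(2,\mathbb{Z})\times\{1\}^{g-1} \subset Z_{\Sp(2g,\mathbb{Z})}(H)$, which acts transitively on the orbit, forcing $W^H$ to be empty or the whole (disconnected) orbit — either way a contradiction. This normalizer-invariance argument is the precise reason the paper's ``either empty or not connected'' conclusion holds, and your version is the rigorous one. Your remark that the argument hinges on $(4/3)^{1/4}$ being \emph{both} the hexagonal systole \emph{and} the two-dimensional Hermite bound is exactly the point of the paper's line ``$\Lambda_0$ ... has maximal systole, so $\Lambda_1$ has to be hexagonal too.''
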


We will also obtain that the same results holds if we replace $\Sp(2g,\mathbb{Z})$ by $\Aut(\SL(n,\mathbb{Z}))$:

\begin{thm}\label{Corollaire 1}
The set $\mathcal{X}_3 \subset S_n=\SL(n,\mathbb{R})/\SO(n)$ ($n \geqslant 3$) of lattices in $\mathbb{R}^{n}$ whose systoles generate a subspace of dimension greater than 3 does not contain any model for $\underline{E}\Aut(\SL(n,\mathbb{Z}))$.
\end{thm}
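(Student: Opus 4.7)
The plan is to deduce this corollary from Theorem~\ref{Théorème principal} by restricting a hypothetical model to the fixed locus of a carefully chosen involution in $\Aut(\SL(n,\mathbb{Z}))$.

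Concentrate first on the case $n = 2g$. Set $J = \begin{pmatrix} 0 & I_g \\ -I_g & 0 \end{pmatrix} \in \SL(n,\mathbb{Z})$ and let $\sigma : A \mapsto (A^T)^{-1}$ denote the standard outer automorphism that generates $\Aut(\SL(n,\mathbb{Z}))/\Inn(\SL(n,\mathbb{Z}))$. I would introduce the element $\tau = \Ad(J) \circ \sigma \in \Aut(\SL(n,\mathbb{Z}))$ and check by direct matrix computation the following four properties: (i) $\tau^2 = \mathrm{id}$; (ii) the fixed subgroup of $\tau$ in $\SL(n,\mathbb{Z})$ is precisely $\Sp(n,\mathbb{Z})$; (iii) $\tau$ commutes with the standard Cartan involution of $\SL(n,\mathbb{R})$, so that the fixed locus of $\tau$ on $S_n$ is the totally geodesic copy of the Siegel space $\mathfrak{h}_g = \Sp(n,\mathbb{R})/\U(g)$; and (iv) for every $\gamma \in \Sp(n,\mathbb{Z})$ one has $\tau(\gamma) = \gamma$ (using the defining relation $\gamma^T J \gamma = J$ together with $J^{-1} = -J$), hence $\gamma$ and $\tau$ commute as elements of $\Aut(\SL(n,\mathbb{Z}))$.

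I would then argue by contradiction. Suppose $W \subset \mathcal{X}_3 \cap S_n$ is a model for $\underline{E}\Aut(\SL(n,\mathbb{Z}))$. Since $\langle \tau \rangle$ is finite, $W^\tau$ is contractible. For every finite subgroup $K \subset \Sp(n,\mathbb{Z})$, property (iv) gives $\langle K, \tau \rangle \cong K \times \langle \tau \rangle$, which remains a finite subgroup of $\Aut(\SL(n,\mathbb{Z}))$, so $(W^\tau)^K = W^{\langle K, \tau \rangle}$ is contractible. Since the restriction of the $\Aut(\SL(n,\mathbb{Z}))$-action to $\Sp(n,\mathbb{Z})$ coincides with the standard action on $\mathfrak{h}_g$, this shows that $W^\tau$ is a model for $\underline{E}\Sp(n,\mathbb{Z})$. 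But $W^\tau \subset W \cap S_n^\tau \subset \mathcal{X}_3 \cap \mathfrak{h}_g$, directly contradicting Theorem~\ref{Théorème principal}.

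The hard part will be the odd case $n = 2g+1$, where there is no symplectic form on $\mathbb{R}^n$ and $\sigma$ alone has only the single fixed point $I \in \mathcal{X}_3$ on $S_n$. A natural strategy is to choose a symmetric $g \in \SL(n,\mathbb{Z})$ of indefinite signature and work with the involution $\Ad(g) \circ \sigma$, whose fixed locus in $S_n$ is the symmetric space of an arithmetic indefinite orthogonal group; one would then have to establish an analog of Theorem~\ref{Théorème principal} for that smaller symmetric space, and this is where the genuine difficulty lies.
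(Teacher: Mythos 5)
Your treatment of the even case $n=2g$ is correct and in substance identical to the paper's: the paper also works with the involution $\alpha = \Ad\bigl(\begin{smallmatrix} 0 & -I_g \\ I_g & 0 \end{smallmatrix}\bigr)\circ\sigma$ (which equals your $\tau$, since $\Ad(J)=\Ad(-J)$), observes that its fixed locus in $S_n$ is $\mathfrak{h}_g$, and then reuses the finite group $H\subset\Sp(2g,\mathbb{Z})$ built in the proof of Theorem~\ref{Théorème principal}. One small stylistic point: you do not actually need to establish that $W^\tau$ is a full model for $\underline{E}\Sp(2g,\mathbb{Z})$; the proof of Theorem~\ref{Théorème principal} only exhibits a single finite $H\subset\Sp(2g,\mathbb{Z})$ with $W^H$ non-contractible for every $W\subset\mathcal{X}_3\cap\mathfrak{h}_g$, so it suffices to note that $\langle H,\tau\rangle$ is a finite subgroup of $\Aut(\SL(2g,\mathbb{Z}))$ with $W^{\langle H,\tau\rangle}=(W\cap\mathfrak{h}_g)^H$.

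The odd case $n=2p+1$ is a genuine gap, and the route you sketch would not close it easily. Taking $\Ad(g)\circ\sigma$ with $g$ a \emph{symmetric} indefinite integral form produces a fixed locus which is the symmetric space of an arithmetic indefinite orthogonal group, and you would then have to prove a new, separate analogue of Theorem~\ref{Théorème principal} for $\mathrm{O}(p,q;\mathbb{Z})$ acting on lattices in $\mathbb{R}^{p+q}$; nothing in the paper gives you that. The paper avoids this entirely: it combines $\sigma$ with conjugation by the (non-symmetric) matrix
$\bigl(\begin{smallmatrix} 1 & 0 & 0 \\ 0 & 0 & -I_p \\ 0 & I_p & 0 \end{smallmatrix}\bigr)$
\emph{and} with the inner involution by the diagonal matrix $(1,-1,\dots,-1)$. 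The joint fixed locus of these two commuting involutions is the set of matrices $\bigl(\begin{smallmatrix} 1 & 0 \\ 0 & B \end{smallmatrix}\bigr)$ with $B\in\Sp(2p,\mathbb{R})$, i.e.\ an embedded copy of the Siegel space rather than an orthogonal symmetric space. One then adjoins the diagonal matrices $(1,I_2,\dots,-I_2,\dots,I_2)$ and $(1,A_0,\dots,A_0)$ as in Theorem~\ref{Théorème principal}; the resulting finite subgroup $\hat H\subset\Aut(\SL(n,\mathbb{Z}))$ fixes in $S_n$ only the single lattice $\mathbb{Z}\oplus\Lambda_0\oplus\dots\oplus\Lambda_0$, whose unique shortest vector $\pm e_1$ spans a line, so this lattice is not in $\mathcal{X}_3$ and $W^{\hat H}$ is empty for every $W\subset\mathcal{X}_3$. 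That is the missing idea: reduce the odd case to a Siegel-space fixed locus by pairing $\sigma$ with an inner automorphism, rather than to an orthogonal symmetric space.
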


This second result is noteworthy because $\SL(n,\mathbb{Z})$ and $\Aut(\SL(n,\mathbb{Z}))$ only differ by a finite group and both act by isometries on $S_n$.

The remaining of this note is devoted to the proofs of Theorem \ref{Théorème principal} and Theorem \ref{Corollaire 1}.

\section{Proofs of the results}

We begin with the proof of Theorem \ref{Théorème principal}:

\begin{proof}[Proof of Theorem \ref{Théorème principal}]
 
Recall that the group $\Sp(2g,\mathbb{R})$ is the set of matrices $A$ of size $2g$ such that $^tAJA=J$ where $J$ is the block-diagonal matrix ($J_2,J_2,\dots, J_2)$ and $J_2=\begin{pmatrix}
0 &-1
\\ 1 & 0 
\end{pmatrix}$. In fact, $J$ is the matrix of a symplectic form $\omega$ of $\mathbb{R}^{2g}$ in the canonical basis. A basis of $\mathbb{R}^{2g}$ is said symplectic if the associated matrix is symplectic, which is equivalent to the fact that the matrix of the symplectic form $\omega$ in this basis is $J$. To prove the theorem, we will show that there exists a finite subgroup $H \subset \Sp(2g,\mathbb{Z})$ such that the fixed point set $W^H$ is not contractible, for every subset $W \subset (\mathcal{X}_3 \cap \mathfrak{h}_g)$.

 The result holds trivially for $g=1$, but let us recall some facts about the systole function in $\mathfrak{h}_1=\mathbb{H}^2$. First recall that we identify a point $\tau \in \mathbb{H}^2$ with the lattice generated by 1 and $\tau$ rescaled to have covolume 1. The well-rounded retract in $\mathbb{H}^2$ is then the Bass-Serre tree of $\SL(2,\mathbb{Z})$. Then, note that the lattices with maximal systole in $\mathbb{R}^2$ are the hexagonal ones. They are the translates by $\SL(2,\mathbb{Z})$ of the standard hexagonal lattice $\Lambda_0$, associated in $\mathbb{H}^2$ with $\tau_0=e^{i\frac{\pi}{3}}$. The point $\tau_0$ is the only fixed point by the homography $z \mapsto 1-\frac{1}{z}$ associated to the matrix $A_0=\begin{pmatrix}
1 & -1 \\ 1 & 0
\end{pmatrix} \in \SL(2,\mathbb{Z})$.

Let us continue with the case $g=2$. A way to construct a symplectic lattice in $\mathbb{R}^4$, with its canonical basis $(e_1, e_2, e_3, e_4)$, is to sum two lattices in $\vect{e_1,e_2}$ and $\vect{e_3,e_4}$, which are orthogonal for both the symplectic form $\omega$ and the usual euclidean scalar product. The corresponding element in $\mathfrak{h}_2=\Sp(4,\mathbb{R})/\U(2)$ belongs to a subspace homeomorphic to $\SL(2,\mathbb{R})/\SO(2)\times \SL(2,\mathbb{R})/\SO(2)$ which can be identified with the product of two copies of the hyperbolic plane $\mathbb{H}^2$. This subspace is also the fixed point set of the finite subgroup of $\Sp(4,\mathbb{Z})$ generated by the diagonal matrix  $(I_2,-I_2)$.

A lattice $\Lambda$ in $\mathbb{R}^4$ corresponding to an element in $\mathbb{H}^2 \times \mathbb{H}^2$ is a product of two lattices $\Lambda_1 \times \Lambda_2$ which are orthogonal, so the systoles of $\Lambda$ belong to the subset $(\Gamma_1 \times \{0\}) \cup (\{0\} \times \Gamma_2)$. $\Lambda$ has three linearly independant systoles if and only if $\Lambda_1$ and $\Lambda_2$ have the same systole and one of them is well-rounded. 

\vspace{0.5 cm}

\noindent \textbf{Claim}: The fixed point set $W^H$, with $H$ being the subgroup of $\Sp(4,\mathbb{Z})$ generated by $(I_2,-I_2)$ and $(I_2,A_0)$, is either empty or not connected.
\begin{proof}[Proof of the claim] The fixed point set of $\mathbb{H}^2 \times \mathbb{H}^2$ by the pair $(I_2,A_0) \in \SL(2,\mathbb{Z}) \times \SL(2,\mathbb{Z}) \subset \Sp(4,\mathbb{Z})$ is $\mathbb{H}^2 \times \{\tau_0\}$. If $\Lambda=\Lambda_1 \times \Lambda_0$ has 3 linearly independant systoles and lie in this set, then $\Lambda_0$ is hexagonal and has maximal systole, so $\Lambda_1$ has to be hexagonal too. It follows that $W^H$ is either empty or homeomorphic to the set of translates of $\tau_0$ by $\SL(2,\mathbb{Z})$ and hence is discrete.
\end{proof}
So $W^H$ is not contractible and $W$ is not a model for $\underline{E}\Sp(4,\mathbb{Z})$. We have proved the theorem in the case $g=2$.

For the general case, we will explain which finite subgroup $H$ of $\Sp(2g,\mathbb{Z})$ we will take. It will be generated by some finite order matrices in  $\SL(2,\mathbb{Z}) \times \dots \times \SL(2,\mathbb{Z}) \subset \Sp(2g,\mathbb{Z})$. First consider the $g$ diagonal matrices of the form $(I_2,I_2,\dots, -I_2, I_2,\dots, I_2)$. The fixed point set of the finite subgroup generated by them is $\mathbb{H}^2 \times \dots \times \mathbb{H}^2$. Add to this subgroup the matrix $(I_2, A_0,\dots, A_0)$. The fixed point set $(\mathfrak{h}_g)^H$ is then homeomorphic to $\mathbb{H}^2 \times \{\tau_0\} \times \dots \times \{\tau_0\}$ and we can apply the preceeding argument.
\end{proof}

\begin{rmq}
The symmetric space $\mathbb{H}^2 \times \mathbb{H}^2$ admits a $\SL(2,\mathbb{Z})\times \SL(2,\mathbb{Z})$-equivariant deformation retract of dimension $\vcd(\SL(2,\mathbb{Z})\times \SL(2,\mathbb{Z}))=2$ which is just the product of two copies of the well-rounded retract of $\mathbb{H}^2$, but the associated lattices in $\mathbb{R}^2 \times \mathbb{R}^2=\mathbb{R}^4$ are not well-rounded in general.
\end{rmq}

\begin{rmq}
It is worth mentioning that in the case $g=2$, MacPherson and McConnell have constructed in \cite{MacPherson} a \textit{weak spine} of $\mathfrak{h}_2$, that is, for every finite index torsionfree subgroup $\Gamma \subset \Sp(4,\mathbb{Z})$, a cocompact $\Gamma$-equivariant deformation retract $W_{\Gamma}$ of $\mathfrak{h}_2$ of dimension $\vcd(\Sp(4,\mathbb{Z}))=\cd(\Gamma)=4$. The methods they used are slightly different as the ones for the well-rounded retract, but do not yield a $\Sp(4,\mathbb{Z})$-equivariant deformation retract. They used the Voronoi decomposition of the symmetric space $\SL(4,\mathbb{R})/\SO(4)$ (identified with the set of positive definite quadratic forms in $\mathbb{R}^4$ modulo homotheties) and studied the intersection of the cells with $\mathfrak{h}_2=\Sp(4,\mathbb{R})/\U(2)$. It is the first example of a (weak) spine of a nonlinear symmetric space of real rank greater than 1. Note that we can also use the Voronoi sets of $\SL(2,\mathbb{R})/\SO(2)=\mathbb{H}^2$ to construct the well-rounded retract for $\SL(2,\mathbb{Z})$. See Chap.VII of \cite{Martinet} for more about the Voronoi sets. 
\end{rmq}

We continue with the proof of Theorem \ref{Corollaire 1}:

\begin{proof}[Proof of Theorem \ref{Corollaire 1}]
 Recall that for $n \geqslant 3$, the group $\Aut(\SL(n,\mathbb{Z}))$ is the semidirect product of the group of inner automorphism, which is isomorphic to $\PSL(n,\mathbb{Z})$, and the outer automorphism group generated by the conjugation by the diagonal matrix with entries $(-1,1,\dots,1)$ and the automorphism $\sigma$ defined by $\sigma(X)=(^tX)^{-1}$ for $X \in \SL(n,\mathbb{Z})$ (see \cite{Hua-Reiner}). The usual action of $\PSL(n,\mathbb{Z})$ on $S_n$ extends to an isometric action of $\Aut(\SL(n,\mathbb{Z}))$.
 
We begin with the case where $n=2p$ is even. We see that the Siegel space $\mathfrak{h}_p=\Sp(2p,\mathbb{R})/\U(p)$ is the fixed point set of the automorphism $\alpha$ defined by:
\[ \alpha(X)=\begin{pmatrix}
0 & -I_p \\ I_p & 0
\end{pmatrix} \sigma(X) \begin{pmatrix}
0 & I_p \\ -I_p & 0
\end{pmatrix}, \]
that is the composition of $\sigma$ and an inner automorphism. Then we can take the subgroup of $\Aut(\SL(2p,\mathbb{Z}))$ generated by $\alpha$ and the subgroup $H$ in the proof of Theorem \ref{Théorème principal} and apply the same argument as before.

If $n=2p+1$ is odd, we see that the fixed point set by the subgroup $\tilde{H}$ of $\Aut(\SL(n,\mathbb{Z}))$ generated by the inner automorphism of conjugation by the diagonal matrix $(1,-1,\dots,-1)$ and the outer automorphism $\tilde{\alpha}$ defined by:
\[ \tilde{\alpha}(X)=\begin{pmatrix}
1 & 0 & 0 \\
0 & 0 & -I_p \\ 0 & I_p & 0
\end{pmatrix} (^t X)^{-1} \begin{pmatrix}
1 & 0 & 0 \\
0 & 0 & I_p \\ 0 & -I_p & 0
\end{pmatrix}, \]
consists of all matrices of the form $\begin{pmatrix}
1 & 0 \\
0 & B
\end{pmatrix}$ with $B \in \Sp(2p,\mathbb{R})$. Then if we consider the finite subgroup $\hat{H}$ generated by $\tilde{H}$, the diagonal matrices $(1,I_2,I_2,\dots,-I_2,I_2,\dots,I_2)$ and $(1,A_0,\dots,A_0)$, the fixed point set is reduced to the lattice $\{1\} \times \Lambda_0 \times \dots \times \Lambda_0$ which has only one systole, so its intersection with $\mathcal{X}_3$ is empty.
\end{proof}

\begin{rmq}
The proof of Theorem \ref{Corollaire 1} only involves the outer automorphism $\sigma$. In fact, the well-rounded retract is also a spine for $\PGL(n,\mathbb{Z})$, which is of index 2 in $\Aut(\SL(n,\mathbb{Z}))$. Note also that in the case $n=2$, $\sigma$ is an inner automorphism and the Bass-Serre tree is the unique minimal spine for $\PGL(2,\mathbb{Z})$.
\end{rmq}

\begin{rmq}

It follows from the proof of Theorem \ref{Corollaire 1}, that if $W$ is a $\Aut(\SL(2g,\mathbb{Z}))$-equivariant deformation retract of $S_{2g}$, then its intersection with $\mathfrak{h}_g$ is a model for $\underline{E}\Sp(2g,\mathbb{Z})$. Then, to construct a spine for $\Sp(2g,\mathbb{Z})$, we could try to find one for $\Aut(\SL(2g,\mathbb{Z}))$. As we just saw, one cannot do this using just the systole function, but one can hope to succeed by using other classical functions on the space of lattices. For instance we can think to the $k$-systoles functions, which measure the volume of the $k$-dimensional sections of the lattice (see \cite{Bavard1} for definitions and properties). Note that like the usual systole, the $k$-systoles are exponentials of Busemann functions.

\end{rmq}

\vspace{0.5cm}

\noindent \textbf{Acknowledgements.} The author thanks Simon André, James Farre and Juan Souto for their relevant comments and corrections.

\bibliographystyle{plain}
\bibliography{biblio_siegel}

\hspace{0.5cm}

\textsc{IRMAR, Université de Rennes 1}

\textit{E-mail address}: \texttt{cyril.lacoste@univ-rennes1.fr}

\end{document}